\renewcommand{\marginpar}[1]{}
\numberwithin{equation}{section}
\newtheorem{theorem}{Theorem}[section]
\newtheorem{lemma}[theorem]{Lemma}
\newtheorem{fact}[theorem]{Fact}
\theoremstyle{definition}
\newtheorem{remark}[theorem]{Remark}
\newtheorem{definition}[theorem]{Definition}
\newtheorem*{question}{Question}
\newcommand{\cb}{\operatorname{Cb}}
\newcommand{\tp}{\operatorname{tp}}
\def\dcl{\mathrm{dcl}}
\def\cl{\mathrm{cl}}
\def\aclq{\mathrm{acl}^\mathrm{eq}}
\def\Par#1{\ulcorner#1 \urcorner}
\def\Ind#1#2{#1\setbox0=\hbox{$#1x$}\kern\wd0\hbox to 0pt{\hss$#1\mid$\hss}
\lower.9\ht0\hbox to 0pt{\hss$#1\smile$\hss}\kern\wd0}
\def\Notind#1#2{#1\setbox0=\hbox{$#1x$}\kern\wd0\hbox to 0pt{\mathchardef\nn="0236\hss$#1\nn$\kern1.4\wd0\hss}\hbox to 0pt{\hss$#1\mid$\hss}\lower.9\ht0
\hbox to 0pt{\hss$#1\smile$\hss}\kern\wd0}
\def\ind{\mathop{\mathpalette\Ind{}}}
\def\CML{$2$-tightness}
\def\cml{$2$-tight}
\def\CBP{$1$-tightness}
\def\cbp{$1$-tight}
\begin{document}

\baselineskip=17pt

\title{On Variants of CM-triviality}

\author[T. Blossier]{Thomas Blossier}
\address{Universit\'e de Lyon; CNRS; Universit\'e Lyon 1; Institut
Camille Jordan UMR5208, 43 boulevard du 11 novembre 1918, F--69622
Villeurbanne Cedex, France}
\email{blossier@math.univ-lyon1.fr}

\author[A. Martin-Pizarro]{Amador Martin-Pizarro}
\address{Universit\'e de Lyon; CNRS; Universit\'e Lyon 1; Institut
Camille Jordan UMR5208, 43 boulevard du 11 novembre 1918, F--69622
Villeurbanne Cedex, France}
\email{pizarro@math.univ-lyon1.fr} 

\author[F. Wagner]{Frank O. Wagner}
\address{Universit\'e de Lyon; CNRS; Universit\'e Lyon 1; Institut
Camille Jordan UMR5208, 43 boulevard du 11 novembre 1918, F--69622
Villeurbanne Cedex, France}
\email{wagner@math.univ-lyon1.fr}

\date{30 August 2012}

\begin{abstract}
We introduce a generalization of CM-triviality relative to a fixed invariant collection of partial types, in analogy to the Canonical Base Property defined by Pillay, Ziegler and Chatzidakis which generalizes one-basedness. We show that, under this condition, a stable field is internal to the family, and a group of finite Lascar rank has a normal nilpotent subgroup such that the quotient is almost internal to the family.
\end{abstract}

\subjclass[2010]{03C45}

\keywords{Model Theory, CM-triviality, Canonical Base Property}

\maketitle

\section{Introduction}

Recall that a stable theory is called one-based if the canonical base of any real tuple $\bar a$ over an algebraically 
closed set $A$ is algebraic over $\bar a$. Hrushovski and Pillay have shown \cite{HrPi87} that in a stable one-based group, definable sets are finite unions of cosets of $\aclq(\emptyset)$-definable subgroups. Furthermore, the group is abelian-by-finite. In the finite Lascar rank context, the notion of one-basedness agrees with local modularity and also with $k$-linearity (i.e.\ the canonical parameter of any uniformly definable family of curves has Lascar rank at most $k$) for any $k>0$ (and in particular these notions agree for different $k$). 

Hrushovski, in his \emph{ab initio} strongly minimal set \cite{Hr93}, introduced a weaker notion than one-basedness, CM-triviality, of which Baudisch's new uncountably categorical group \cite{Ba96} is a proper example. Pillay \cite{Pi95} showed that neither infinite fields nor bad groups could be 
interpretable in a CM-trivial stable theory; he concluded that a CM-trivial group of finite Morley rank must be nilpotent-by-finite. In \cite{Pi00} Pillay defined a whole hierarchy of geometrical properties ($n$-ampleness for $n>0$), where one-based means non-$1$-ample, CM-trivial agrees with non-$2$-ample, and infinite fields are $n$-ample for all $n>0$. 

In \cite{PiZi03}, Pillay and Ziegler reproved the function field case of the Mordell-Lang conjecture in characteristic zero inspired by Hrushovski's original proof but avoiding the use of Zariski Geometries. Instead,
motivated by work of Campana \cite{Ca80} and Fujiki \cite{Fu}, 
they showed in particular that the collection of types of finite Morley rank in the theory of differentially closed fields in characteristic zero has the \emph{Canonical Base Property} (in short, CBP) with respect to the field of constants. That is, given a definable set $X$ of bounded differential degree and Morley degree $1$ in a saturated differentially closed field, the field of definition of the constructible set determined by $X$ is almost internal to the constant field over a generic realisation of $X$. Furthermore, they showed the 
CBP for certain types in the theory of separably closed fields as well as for types of finite rank in the theory of generic difference fields of characteristic zero. The latter result was subsequently generalised by Chatizidakis \cite{zC11} to all positive characteristics.

Clearly, the CBP is a generalisation of one-basedness, since for the latter the type of the canonical base of the generic type of $X$ is already algebraic over a realisation. For the CBP, we replace algebraicity by almost internality with respect to a fixed invariant collection $\Sigma$ of partial types. Chatizidakis \cite{zC11} has shown that for a theory of finite rank, the CBP already 
implies a strengthening of this notion, called UCBP, introduced by Moosa and Pillay \cite{MoPi08} in their study of compact complex spaces. 

Kowalski and Pillay \cite{KoPi06}, generalizing the definability results obtained for one-based theories, showed that a connected group definable in a stable theory with the CBP is central-by-(almost $\Sigma$-internal).

The goal of this note is to introduce an anlogous generalization of CM-triviality, again replacing algebraicity by almost $\Sigma$-internality. In view of the ampleness hierarchy, we call it \CML\ (and \CBP\ is just the CBP).
Similarly to the generalisation from one-based to the CBP, we show that a \cml\ stable field is $\Sigma$-internal, and a \cml\ group of finite Lascar rank is nilpotent-by-(almost $\Sigma$-internal). Recent work in \cite{BMPZ12} shows that the inclusion of the CM-trivial theories in the \cml\ ones is proper. 

\section{One-basedness and \CBP}\label{S:Defs}

The notions we will present could be generalised to any simple theory, replacing imaginary algebraic closure by bounded closure. However, the 
role of stability is essential for our main result, Theorem \ref{T:Main}. We shall thus restrict ourselves to the stable case. In the following we will be working inside a sufficiently saturated model of a stable theory $T$. Note that we allow our elements and types to be imaginary, unless stated otherwise.

\begin{definition}\label{D:1based}
The theory $T$ is \emph{one-based} if for every real set $A$ and every real tuple $c$ the canonical base $\cb(c/A)$ is algebraic over $c$. Equivalently, for every pair of real sets $A\subseteq B$ and every real tuple $c$, the canonical base $\cb(c/A)$ is algebraic over $\cb(c/B)$. 
\end{definition}

\begin{remark}\label{R:realim}
It was remarked in \cite{HrPi87} that if $T$ is one-based, the same conclusion holds for (possibly imaginary) sets $A$, $B$ and tuples $c$. As noted in \cite{Pi95}, we may assume that $A$ (and $B$) are small submodels.\end{remark}

Recall some of the results proved by Hrushovski and Pillay \cite{HrPi87}:
\begin{fact}\label{F:1based}
Let $G$ be an interpretable group in a one-based stable theory. Then:
\begin{enumerate}
\item Every connected definable subgroup $H$ of $G$ is definable over $\aclq(\emptyset)$. 
\item The connected component of $G$ is abelian. 
\end{enumerate}
\end{fact}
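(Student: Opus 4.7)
The plan is to deduce both parts from one-basedness in its imaginary formulation (Remark \ref{R:realim}), combined with standard stable-group theory.

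For part (1), let $e$ be the canonical parameter of the connected subgroup $H$, and let $p \in S(e)$ be the unique generic type of $H$. Since $p$ is stationary and definable over $e$, its canonical base equals $e$ up to interdefinability in the imaginary sort, so one-basedness yields $e \in \aclq(a)$ for every realisation $a \models p$. To pass from ``$e$ is algebraic over every generic of $H$'' to ``$e \in \aclq(\emptyset)$'', I would take a Morley sequence $(a_i)_{i<\omega}$ in $p$ over $e$. Standard canonical-base computations give $e \in \dcl^{\mathrm{eq}}(\{a_i : i<\omega\})$, while each $a_i$ algebraises $e$. Extracting an $\emptyset$-indiscernible sequence with the same Ehrenfeucht--Mostowski type, and using stationarity of $p$, one shows that $e$ has only boundedly many conjugates under the automorphism group of the monster, i.e.\ $e \in \aclq(\emptyset)$.

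For part (2), it suffices to show that $G^0$ is abelian. For each $c \in G^0$, consider the graph of the inner automorphism $\phi_c : x \mapsto cxc^{-1}$,
\[
\Gamma_c = \{(x, cxc^{-1}) : x \in G^0\} \leq G^0 \times G^0,
\]
a connected definable subgroup isomorphic to $G^0$. By part (1) applied inside $G^0 \times G^0$, every $\Gamma_c$ is definable over $\aclq(\emptyset)$; hence the uniformly definable family $\{\Gamma_c : c \in G^0\}$ contains only finitely many distinct members, and by connectedness of $G^0$ the value $\Gamma_c$ is constant on the generic type of $G^0$, say equal to $\Gamma_0$. Thus all generic $c$ share the same inner automorphism $\phi_0$. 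Since the product $cc'$ of two independent generics is again generic, $\phi_{cc'} = \phi_c \circ \phi_{c'} = \phi_0 \circ \phi_0$ must also equal $\phi_0$, forcing $\phi_0 = \mathrm{id}$. Hence every generic of $G^0$ is central, and as $Z(G^0)$ is a definable subgroup containing the generic type, $Z(G^0) = G^0$.

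The main obstacle in this plan is the descent step in part (1): the fact that $e$ is algebraic over every individual generic of $H$ does not at face value yield $e \in \aclq(\emptyset)$, and one must invoke the canonical-base/Morley-sequence machinery carefully to conclude. Granting part (1), part (2) is essentially formal, reducing to the idempotent-automorphism observation together with the standard fact that a definable subgroup of a connected stable group containing a generic type equals the whole component.
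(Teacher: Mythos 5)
The paper states this Fact without proof, as a quotation from Hrushovski--Pillay \cite{HrPi87}, so there is no internal argument to compare against; judged on its own, your proof of part (1) has a genuine gap at exactly the point you flag, and your proposed repair cannot work. From one-basedness you correctly get $e=\Par{H}\in\aclq(a)$ for $a$ generic in $H$ over $e$, since $e$ is interalgebraic with $\cb(a/\aclq(e))$. But the descent via a Morley sequence and extraction of an $\emptyset$-indiscernible sequence uses only the abstract configuration ``$e$ is the canonical base of a stationary type $p$ and every realisation of $p$ algebraises $e$'', and that configuration is consistent with $e\notin\aclq(\emptyset)$ even in a one-based theory: in the theory of an equivalence relation with infinitely many infinite classes, let $e$ be the imaginary class of an element $a$ and $p=\tp(a/e)$; then $\cb(p)=e$, every realisation of $p$ defines $e$, and a Morley sequence in $p$ is any sequence of distinct elements of the class, yet $e\notin\aclq(\emptyset)$. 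So no bookkeeping with EM-types and indiscernibles can close the gap --- the group structure must enter. The missing idea is the translation trick: take $g$ generic in $G$ over $e,a$ and set $c=ga$. Then $a$ is still generic in $H$ over $e,g$, so $c$ is generic in the coset $gH$ over $e,g$ and $\cb(c/\aclq(e,g))$ is interalgebraic with $\Par{gH}$; one-basedness gives $\Par{gH}\in\aclq(c)$, hence $e\in\aclq(c)$ because $H=(gH)^{-1}(gH)$. On the other hand $c$ is generic in $G$ over $e,a$, so $c\ind e$, and together with $e\in\aclq(c)$ this forces $e\in\aclq(\emptyset)$.

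Granting part (1), your part (2) is correct and is essentially the classical argument via graphs of inner automorphisms: each $\Gamma_c$ is a connected definable subgroup of $G^0\times G^0$ (being definably isomorphic to $G^0$), the set of their canonical parameters is a definable set contained in $\aclq(\emptyset)$ (more precisely, in the algebraic closure of the parameters of $G$) and hence finite, and your idempotency observation finishes. In fact you can shortcut the endgame: finiteness of $\{\Gamma_c: c\in G^0\}$ says directly that $Z(G^0)$, whose cosets are the fibres of $c\mapsto\Gamma_c$, has finite index in $G^0$, so connectedness gives $Z(G^0)=G^0$ without discussing products of independent generics.
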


\begin{definition}\label{D:CM}
The theory $T$ is \emph{CM-trivial} if for every pair of real sets $A\subseteq B$ and every real tuple $c$, if $\aclq(Ac) \cap \aclq(B) = \aclq(A)$ then $\cb(c/A)$ is algebraic over $\cb(c/B)$. 
\end{definition}

Obviously, any one-based theory is CM-trivial.

\begin{remark}
As shown in \cite{Pi95}, Remark \ref{R:realim} holds for CM-trivial theories as well.
\end{remark}

Recall that a \emph{bad group} is a connected group of finite Morley rank which is non-solvable and whose proper connected definable subgroups are all nilpotent. It is unknown whether bad groups exist. A bad group of minimal rank can be taken simple, i.e.\ with no proper normal subgroups, by dividing out by its finite centre.

Pillay shows in \cite{Pi95} the following:
\begin{fact}\label{F:CM}
Let $T$ be a CM-trivial stable theory. Then:
\begin{enumerate}
\item No infinite field can be interpreted. 
\item An interpretable group of finite Morley rank is nilpotent-by-finite. In particular, no bad group can be interpreted. 
\end{enumerate}
\end{fact}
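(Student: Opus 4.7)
The plan is to handle the two statements together, with (2) using (1) at a crucial step via Zilber's field configuration. For (1), I proceed by contradiction: assuming $K$ is an interpretable infinite (stable) field, the goal is to produce a configuration that violates Definition \ref{D:CM} directly. The natural witness comes from the two-dimensional generic family of affine lines in $K^2$: starting from a generic line $\ell_0$, a generic point $P$ on $\ell_0$, and a generic line $\ell_1$ through $P$, I take $A_0$ to be the canonical parameter of $\ell_0$, $B_0$ the code of the pair $(\ell_0,P)$, and $c$ a suitable piece of $\ell_1$. Rank counts in the affine plane give the intersection hypothesis $\aclq(A_0 c)\cap\aclq(B_0)=\aclq(A_0)$, while $\cb(c/A_0)$ and $\cb(c/B_0)$ record information at different ``levels'' of the point-line incidence (the latter sees $\ell_1$, the former does not), so $\cb(c/A_0)\notin\aclq(\cb(c/B_0))$, contradicting CM-triviality.

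For (2), I argue by induction on Morley rank. Let $G$ be a connected interpretable group of finite Morley rank and assume $G$ is a minimal counterexample: every proper connected definable subgroup is nilpotent-by-finite, but $G$ itself is not. If $G$ is solvable, its derived series yields a non-trivial definable action of a section on an abelian section; Zilber's field configuration then produces an interpretable infinite field, contradicting part (1). Otherwise, quotienting by the (finite) centre, one may assume $G$ is a simple bad group: all its proper connected definable subgroups are nilpotent, yet $G$ is non-solvable. Fixing two distinct generic Borel subgroups $H_1,H_2$, I apply Definition \ref{D:CM} with $A$ the canonical parameter of $H_1\cap H_2$, $B$ obtained by adjoining the code of $H_2$, and $c$ a generic element of $H_1$; rank computations on generic Borels verify the intersection hypothesis, while $\cb(c/A)$ and $\cb(c/B)$ differ because $H_2$ is visible only from $B$.

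The main obstacle is the bad-group case: one needs sharp control of the genericity of Borel subgroups and of their pairwise intersections in a hypothetical simple bad group, in particular the (classical but non-trivial) fact that two distinct generic Borels share a subgroup of strictly smaller rank, and one must verify that the resulting canonical bases truly fail to be algebraic over one another. In contrast, once the correct geometric setup in the affine plane is identified, part (1) reduces to a direct rank computation.
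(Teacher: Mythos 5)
Both halves of your proposal rest on configurations that degenerate; the paper itself only cites \cite{Pi95} for this Fact, but the relevant arguments are exactly the ones adapted in the proof of Theorem \ref{T:Main}, and they differ from yours at the decisive point. For (1), the point--line incidence in the affine plane $K^2$ is a two-level flag: it witnesses failure of one-basedness, not of CM-triviality. Concretely, with $A_0=\Par{\ell_0}$, $B_0$ the code of $(\ell_0,P)$ and $c$ coding $\ell_1$, the intersection hypothesis fails, since $P=\ell_0\cap\ell_1$ lies in $\aclq(A_0c)\cap\aclq(B_0)$ but not in $\aclq(A_0)$; and if you weaken $c$ to restore it, the canonical bases trivialise, because a generic line through a generic point of $\ell_0$ has rank $2$ over $\Par{\ell_0}$ as over $\emptyset$, so it is independent from $\Par{\ell_0}$ and $\cb(c/A_0)\subseteq\aclq(\emptyset)$, which contradicts nothing. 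Pillay's proof needs a three-level flag in $K^3$: a generic plane $ax+by+c=z$, a generic line $y=\lambda x+\mu$ inside it, and a generic point $p$ on that line; then $A=\aclq(\Par{P})$ and $B=\aclq(\Par{P}\Par{\ell})$ satisfy $\aclq(Ap)\cap B=A$, $\cb(p/A)=\Par{P}$, $\cb(p/B)=\Par{\ell}$, and since $a\in\dcl(\Par{P})$ while $a\ind\Par{\ell}$, one gets $\Par{P}\notin\aclq(\Par{\ell})$.

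For (2), the reduction of the solvable case to (1) via field interpretation is fine, but your bad-group configuration collapses: in a simple bad group two distinct Borels intersect \emph{trivially} (this is precisely what the proof of Theorem \ref{T:Main}(2) establishes and exploits), so $\Par{H_1\cap H_2}\in\aclq(\emptyset)$, hence $\cb(c/A)$ is algebraic over $\emptyset$ and no violation of Definition \ref{D:CM} can arise; there is no nontrivial ``shared subgroup of smaller rank'' to work with. The actual argument (\cite[Lemma 3.4 and Proposition 3.5]{Pi95}, generalised in \cite{Wa98}) transfers the plane/line/point flag into $G\times G$: for $a,b,c$ generic independent, the ``plane'' is the coset $(b,c)\cdot G_a$ of the twisted diagonal $G_a=\{(g,g^a)\mid g\in G\}$, the ``line'' is $(b,c)\cdot S_a$ with $S_a=\{(s,s^a)\mid s\in S_0\}$ for a generic conjugate $S_0$ of a Borel $S$ (here the trivial pairwise intersections and the covering of $G$ by the conjugates of $S$ are what guarantee the coset exists generically), and the ``point'' is $(b,c)$. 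CM-triviality then forces $a$ --- definable over $\cb(b,c/A)=\Par{(b,c)\cdot G_a}$ by triviality of the centre --- into $\aclq(b,c,\Par{S_0},\Par{a\cdot Z(S_0)})$, and independence considerations yield the contradiction. Without this group-theoretic analogue of the flag, the bad-group case does not go through.
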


For the remainder of this article, we fix an $\emptyset$-invariant family $\Sigma$ of partial types.
\begin{definition}
A type $p$ over $A$ is \emph{internal to $\Sigma$}, or \emph{$\Sigma$-internal}, if for every realisation $a$ of $p$ there is some superset $B\supset A$ with $a\ind_A B$, and realisations $b_1,\ldots,b_r$ of types in $\Sigma$ based on $B$ such that $a$ is definable over $B,b_1,\ldots,b_r$. If we replace definable by algebraic, then we say that $p$ is \emph{almost internal to $\Sigma$} or \emph{almost $\Sigma$-internal}.\end{definition}

Poizat has shown that in a stable theory the parameter set $B$ can be fixed
independently of the choice of $a$.

\begin{definition}\label{D:CBP}
We say that the theory $T$ is \emph{\cbp} with respect to $\Sigma$ if for every tuple $c$ and every set $A$, the type $\tp(\cb(c/A)/ c)$ is almost $\Sigma$-internal.
\end{definition}

More generally, one can define \cbp\ for a collection of partial types :

\begin{definition}
A collection  $\mathcal{F}$ of partial types in the theory $T$ is \emph{\cbp} with respect to $\Sigma$ if for every set $A_0$ of parameters, every realisation $c$ of a tuple of types in $\mathcal{F}$ with parameters in $A_0$ and every $A\supseteq A_0$, the type $\tp(\cb(c/A)/A_0 c)$ is almost $\Sigma$-internal.\end{definition}

By definition, $T$ is \cbp\ if and only if the collection of all types over $\emptyset$ is. Since we can assume that $c$ contains the parameters $A_0$, this is also equivalent to the collection of all types being \cbp.

The {\em Canonical Base Property} CBP defined by Moosa, Pillay and Chatzidakis states that the collection of all types of finite Lascar rank is \cbp\ with respect to the family of types of Lascar rank one. Our terminology provides for a possible generalisation of non-ampleness for higher values of $n$.

\begin{remark}\label{R:CBPforAB}
Clearly, a one-based theory is \cbp\ with respect to any family $\Sigma$.
\end{remark}

\begin{lemma}\label{L:cbpCm} A theory $T$ is \cbp\ with respect to $\Sigma$ if and only if for every pair of real sets  $A\subset B$ and every real tuple $c$, the type $\tp(\cb(c/A)/\cb(c/B))$ is almost $\Sigma$-internal.\end{lemma}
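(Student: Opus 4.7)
The proof proceeds by direct manipulations of the definitions, using two standard preservation features of almost $\Sigma$-internality in a stable theory: \emph{monotonicity}---enlarging the parameter base preserves almost $\Sigma$-internality---and \emph{descent along independence}, namely that if $\tp(a/D_1D_2)$ is almost $\Sigma$-internal and $a\ind_{D_1}D_2$, then $\tp(a/D_1)$ is also almost $\Sigma$-internal. Both are readily derived from Poizat's strengthening recalled just after Definition \ref{D:CBP}.

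For the direction ($\Rightarrow$), suppose $T$ is \cbp\ with respect to $\Sigma$ and let real sets $A\subseteq B$ and a real tuple $c$ be given. By hypothesis, $\tp(\cb(c/A)/c)$ is almost $\Sigma$-internal, hence so is $\tp(\cb(c/A)/c,\cb(c/B))$ by monotonicity. The defining property of the canonical base yields $c\ind_{\cb(c/B)}B$; since $\cb(c/A)\in\aclq(A)\subseteq\aclq(B)$, this gives $\cb(c/A)\ind_{\cb(c/B)}c$. Descent applied with $a=\cb(c/A)$, $D_1=\cb(c/B)$ and $D_2=c$ then yields that $\tp(\cb(c/A)/\cb(c/B))$ is almost $\Sigma$-internal.

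For the direction ($\Leftarrow$), given a real tuple $c$ and a real set $A$, the decisive choice is $B=A\cup c$, which is still a real set. Since $c\in B$, the type $\tp(c/B)$ is realised and isolated by $x=c$, so $\cb(c/B)\in\dcl(c)$. The hypothesis then delivers that $\tp(\cb(c/A)/\cb(c/B))$ is almost $\Sigma$-internal, and monotonicity---enlarging the parameter base from $\cb(c/B)$, which lies in $\aclq(c)$, to $c$ itself---produces $\tp(\cb(c/A)/c)$ almost $\Sigma$-internal, which is \CBP. The only subtle point is the choice $B=Ac$: once one realises that the canonical base of a realised type lies in the definable closure of the realisation, the equivalence is routine.
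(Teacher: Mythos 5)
Your proof is correct and takes essentially the same route as the paper's: the forward direction via monotonicity plus the fact that a non-forking restriction of an almost $\Sigma$-internal type is again almost $\Sigma$-internal (using $c\ind_{\cb(c/B)}B$), and the converse by choosing $B$ so that $\cb(c/B)$ is interdefinable with $c$. The only cosmetic difference is that you take $B=Ac$ where the paper takes $B=\aclq(Ac)$, which changes nothing of substance.
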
 
So Definition \ref{D:CBP} generalises either of the two equivalent notions of one-basedness.
\begin{proof} This is similar to \cite[Remark 2.1]{Pi95}. Note that the requirement on $A$, $B$ and $\bar c$ to be real is no restriction, as we can replace imaginaries by sufficiently independent real representatives.

One implication is trivial by setting $B=\aclq(Ac)$, since then $\cb(c/B)$ is interalgebraic with $c$. The other 
implication follows from the fact that 
$$c \ind_{\cb(c/B)} B\quad\text{yields}\quad c \ind_{\cb(c/B)} \cb(c/A)$$
and since a non-forking restriction of an almost internal type is again almost internal. 
\end{proof}

Adapting the proof of Fact \ref{F:1based}, Kowalski and Pillay proved the following 
\cite{KoPi06}.

\begin{fact}\label{F:CBP}
Let $G$ be a type-definable group in a stable theory which is \cbp\ with respect to $\Sigma$.
\begin{enumerate}
\item Given a connected type-definable subgroup $H\leq G$, the type of its canonical parameter $\tp(\Par{H})$ is almost $\Sigma$-internal.
\item If $G$ is connected, then $G/Z(G)$ is almost $\Sigma$-internal.
\end{enumerate}
\end{fact}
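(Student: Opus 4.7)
The plan is to reduce each part to the \CBP\ by identifying the desired object (either $\Par{H}$ or the coset $gZ(G)$) with the canonical base of a judiciously chosen stationary type, and then to strip the resulting ``generic parameter'' from the base of the internality statement.

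For part~(1), I would set $e := \Par{H}$ and let $g$ realise the generic type $p$ of $H$ over $e$. Since $H$ is connected, it coincides with the left stabiliser of $p$, so $e$ is interalgebraic (in fact interdefinable) with $\cb(p) = \cb(g/e)$. Applying Definition~\ref{D:CBP} with $c = g$ and $A = e$, the type $\tp(\cb(g/e)/g)$ is almost $\Sigma$-internal, and hence so is $\tp(e/g)$. The remaining task is to conclude that $\tp(e)$ over $\emptyset$ is almost $\Sigma$-internal. I would first invoke Poizat's uniform version of internality in the stable setting to fix a set $B \supseteq g$ with $e \ind_g B$ and a tuple $\bar d$ of realisations of $\Sigma$-types based on $B$ such that $e \in \aclq(B, \bar d)$. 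Then, using independent copies of $g$ over $e$ and combining their associated internality witnesses, I would replace $B$ by a base independent from $e$ over $\emptyset$.

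For part~(2), fix $g \in G$ and take $h$ generic in $G$ over $g$. I claim that the canonical base of $q_g := \tp(h, g h g^{-1}/g)$ is interdefinable with $gZ(G)$: indeed, $q_{g_1} = q_{g_2}$ iff the centraliser $C_G(g_1^{-1} g_2)$ is generic in $G$, which (using connectedness of $G$) is equivalent to $g_1^{-1} g_2 \in Z(G)$. Applying \CBP\ then yields that $\tp(gZ(G) / h, g h g^{-1})$ is almost $\Sigma$-internal, and the parameter-removal technique from part~(1) gives that $\tp(gZ(G))$ is almost $\Sigma$-internal; since $g$ was arbitrary, this shows that $G/Z(G)$ is almost $\Sigma$-internal.

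The main obstacle, in both parts, will be the parameter-removal step, since the generic parameter ($g$ in part~(1), or the pair $(h, g h g^{-1})$ in part~(2)) is not independent from the object ($e$ or $gZ(G)$) whose type we wish to show almost $\Sigma$-internal over $\emptyset$. A direct substitution does not work; instead, a careful forking argument involving independent copies, exploiting the stability of $T$ and the uniform character of internality in stable theories, will be required.
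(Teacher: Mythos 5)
The paper does not actually prove this statement; it is quoted as a Fact from Kowalski and Pillay, so there is no internal proof to compare against, and I judge your proposal on its own merits. Your identifications are correct: $\Par{H}$ is interalgebraic with $\cb(g/\Par{H})$ for $g$ generic in $H$, and $gZ(G)$ is interalgebraic with the canonical base of the conjugation germ $\tp(h,ghg^{-1}/g)$. The gap is precisely the step you flag as ``the main obstacle'', and the remedy you sketch (independent copies of $g$ over $e$ together with Poizat's uniformity) cannot close it. The intermediate statement ``$\tp(e/g)$ is almost $\Sigma$-internal for $g$ generic in $H$ over $e=\Par{H}$'' is genuinely weaker than the conclusion, and no forking manipulation starting only from it can succeed: in $(K^2,+)$ with $K$ algebraically closed, $H=\{(x,\lambda x)\}$ and $\lambda$ generic, one has $\lambda\in\dcl(g)$ for every generic $g$ of $H$, so $\tp(e/g)$ is algebraic --- hence almost $\Sigma$-internal for any $\Sigma$ whatsoever, with witness base containing $g$ and no $\Sigma$-points needed --- while $\tp(\lambda)$ is the generic type of the field. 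Adding further copies $g_1,g_2,\dots$ independent over $e$ changes nothing, since every witness base still contains some $g_i\nind e$.

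The missing idea is a generic translation, which arranges the independence \emph{before} the \CBP\ is invoked rather than trying to recover it afterwards. Take $a$ generic in $G$ over $e$, then $h$ generic in $H$ over $e,a$, and set $c=ah$. By connectedness of $H$ the coset $aH$ has a unique generic type, so $\cb(c/e,a)$ is interalgebraic with $\Par{aH}$, and $e\in\dcl(\Par{aH})$ since $H=(aH)^{-1}(aH)$. The \CBP\ gives that $\tp(\Par{aH}/c)$, hence $\tp(e/c)$, is almost $\Sigma$-internal; but now $c$ is generic in $G$ over $e,h$, so $c\ind e$ and the nonforking restriction $\tp(e)$ is almost $\Sigma$-internal. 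For part (2) you need not redo the germ argument: apply part (1) inside $G\times G$ to the connected type-definable subgroups $G_a=\{(g,g^a):g\in G\}$, whose canonical parameters are interdefinable with the cosets $aZ(G)$ --- this is exactly your centraliser computation. This yields that $\tp(aZ(G))$ is almost $\Sigma$-internal for every $a\in G$, i.e.\ $G/Z(G)$ is almost $\Sigma$-internal.
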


Hrushovski, Palac\'\i n and Pillay \cite{HPP12} exibit a theory of finite Morley rank without the CBP (i.e. which is not \cbp\ with respect to the family of all types of Lascar rank one), elaborating an example of Hrushovski.
However, if we replace almost internality by analysability, the resulting condition (strong $\Sigma$-basedness, or non weak $1$-$\Sigma$-ampleness in the terminology of \cite{PalWa11}) holds in any simple theory with enough regular types if we take $\Sigma$ to be the family of all non one-based regular types \cite[Theorem 5.1]{PalWa11}, generalising a result of Chatzidakis for types of finite SU-rank \cite{zC11}.

\begin{question}
Is every CM-trivial theory of finite rank \cbp\ with respect to the family of all types of rank $1$?
\end{question}

It is easy to see that if $T$ is \cbp\ with respect to $\Sigma$ and $\Sigma'$ results from $\Sigma$ by removing all one-based types, then $T$ is \cbp\ with respect to $\Sigma'$.

\section{Definable groups in \cml\ theories}

In this section, we will introduce a weaker notion than CM-triviality, called \CML, analogous to the generalisation from one-based to
\cbp. We will show that stable \cml\ fields and \cml\ simple groups of finite Lascar rank are $\Sigma$-internal. More generally, we 
show that a \cml\ group of finite Lascar rank is nilpotent-by-(almost $\Sigma$-internal). 

\begin{definition}\label{CM-linear}
The theory $T$ is called \emph{\cml} with respect to $\Sigma$ if for every pair of sets $A\subset B$ and every tuple $c$, if $\aclq(Ac) \cap\aclq(B) = \aclq(A)$, then $\tp(\cb(c/A)/\cb(c/B))$ is almost $\Sigma$-internal. 
\end{definition}

Again, one can also define \cml\ for a collection of partial types :

\begin{definition}
A collection  $\mathcal{F}$ of partial types in the theory $T$ is \emph{\cml} with respect to $\Sigma$ if for every set $A_0$ of parameters, every realisation $c$ of a tuple of types in $\mathcal{F}$ with parameters in $A_0$ and every $B\supseteq A\supseteq A_0$, if $\aclq(Ac) \cap\aclq(B) = \aclq(A)$, then $\tp(\cb(c/A)/A_0\cb(c/B))$ is almost $\Sigma$-internal.
\end{definition}

As before, $T$ is \cml\ with respect to $\Sigma$ if and only if the collection of all types is.

\begin{remark}
Every CM-trivial theory is \cml\ with respect to any family $\Sigma$. By Lemma 
\ref{L:cbpCm}, a \cbp\ theory with respect to $\Sigma$ is \cml\ with respect to $\Sigma$.

As in Remark \ref{R:realim}, we may assume that $A$ and $B$ in the definition are small submodels. If $T$ is \cml, the property also holds for imaginary tuples $c$. 
\end{remark}

\begin{definition}
A type $p$ is \emph{foreign to  $\Sigma$} if every non-forking extension of $p$ over $B$ is orthogonal to every extension over $B$ of any type in $\Sigma$. 
\end{definition}

We will repeatedly use the following connection between being foreign and internal for stable groups \cite[Theorem 3.1.1 and Corollary 3.1.2]{Wa97}

\begin{fact}\label{F:a.intoint}
\begin{enumerate}
\item If the generic type of a stable type-definable group $G$ is not foreign to $\Sigma$, 
then there is a relatively definable normal subgroup $N$ of infinite index such that $G/N$ is $\Sigma$-internal. 
\item A definably simple stable group or a stable division ring $G$ whose generic type is not foreign to $\Sigma$ must be $\Sigma$-internal.
In particular, if its generic type is almost $\Sigma$-internal then $G$ is $\Sigma$-internal.
\end{enumerate}
\end{fact}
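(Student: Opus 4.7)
\emph{Proof proposal.}

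For part~(1), I would run the standard stabilizer construction. Since the generic of $G$ is not foreign to $\Sigma$, there is a small base $A$, a realisation $g$ of the generic type over $A$, and a tuple $b$ realising a partial type from $\Sigma$ based on $A$ with $g \nind_A b$. Set $c = \cb(\tp(g/\aclq(Ab)))$. By the usual characterisation, $c$ lies in $\dcl$ of a Morley sequence in $\tp(b/\aclq(Ac))$, so, each term of that sequence realising a type in $\Sigma$, the type $\tp(c/A)$ is almost $\Sigma$-internal. Define $N = \mathrm{Stab}(\tp(g/\aclq(Ac)))$, a relatively definable subgroup of $G$. Because $c \notin \acl(A)$, the type $\tp(g/\aclq(Ac))$ is non-generic in $G$, so $[G:N] = \infty$. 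The coset $gN$ is the unique one on which the stationary type $\tp(g/\aclq(Ac))$ concentrates, hence $gN \in \dcl(Ac)$ and $\tp(gN/A)$ is almost $\Sigma$-internal. As every element of $G$ is a product of two generics, the generic of $G/N$ is almost $\Sigma$-internal; the familiar ``absorb the finite algebraic choice into an independent generic'' trick in a stable group then upgrades this to full $\Sigma$-internality of $G/N$. Finally, one replaces $N$ by $\bigcap_{h\in G} hNh^{-1}$, which the descending chain condition on relatively definable subgroups collapses to a finite subintersection: this yields a normal relatively definable subgroup, still of infinite index, with $\Sigma$-internal quotient.

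Part~(2) now follows by applying (1). If $G$ is definably simple, the $N$ supplied by (1) has no choice but to be trivial, forcing $G = G/N$ to be $\Sigma$-internal. If $G$ is a stable division ring, I would apply (1) to the additive group $(G,+)$ and obtain a relatively definable additive subgroup $N$ of infinite index with $(G,+)/N$ $\Sigma$-internal. The largest two-sided ideal $I$ contained in $N$ is $\bigcap_{r,s\in G} rNs$; by the chain condition this equals a finite subintersection, so $I$ is relatively definable and $(G,+)/I$ embeds into a finite power of $(G,+)/N$, hence is $\Sigma$-internal. As $I \subseteq N \subsetneq G$ is an ideal in a division ring, $I = \{0\}$, so $(G,+)$ and a fortiori $G$ is $\Sigma$-internal. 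The ``in particular'' clause is immediate: any almost $\Sigma$-internal type is not foreign to $\Sigma$.

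The chief subtlety is upgrading ``almost $\Sigma$-internal generic of $G/N$'' to ``$\Sigma$-internal $G/N$''; this cannot be done purely at the level of types and relies crucially on being able to write a generic as a product of two independent generics, which swallows the finite algebraic ambiguity into the second factor. In the division ring case, the secondary technical point is closing $N$ under left and right multiplication to get a two-sided ideal while preserving relative definability and $\Sigma$-internality of the quotient, which the chain condition handles cleanly.
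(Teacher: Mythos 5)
The paper offers no proof of this Fact; it is quoted from Wagner, \emph{Stable groups}, Theorem 3.1.1 and Corollary 3.1.2, so I am comparing your argument with that standard proof. Your part (2) is essentially the standard deduction and is correct modulo part (1): definable simplicity forces $N=\{1\}$, and for a division ring the intersection $\bigcap_{r,s}rNs$ is, by Baldwin--Saxl, a finite subintersection, hence a relatively definable two-sided ideal contained in $N$ and therefore trivial, with $(G,+)$ embedding additively into a finite power of the $\Sigma$-internal group $(G,+)/N$. In part (1) the opening moves are also fine: $c=\cb(g/\aclq(Ab))\subseteq\aclq(Ab)$ gives directly that $\tp(c/A)$ is almost $\Sigma$-internal (your appeal to a Morley sequence in $\tp(b/\aclq(Ac))$ is misstated --- the canonical base lies in the definable closure of a Morley sequence in $\tp(g/\aclq(Ab))$ --- but you do not need it), and $g\nind_A c$ does make $\tp(g/\aclq(Ac))$ non-generic.

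The genuine gap is the sentence asserting that $gN$ is the unique coset of $N=\operatorname{Stab}(\tp(g/\aclq(Ac)))$ on which the type concentrates, whence $gN\in\dcl(Ac)$. A forking extension of the generic type does not in general concentrate on a coset of its stabilizer: take $G=(K^2,+)$ with $K$ an algebraically closed field, $g=(x,y)$ generic and $c=y-x^2$; then $\tp(g/\acl(c))$ is the generic type of the parabola $y=x^2+c$, its stabilizer is trivial, and distinct realisations lie in distinct cosets, so $gN\notin\dcl(Ac)$ and indeed $gN\notin\aclq(Ac)$. Hence $\tp(gN/A)$ need not be almost $\Sigma$-internal and the internality of $G/N$ does not follow; your subsequent upgrading steps (products of two generics, absorbing algebraicity, normalising by a finite intersection of conjugates) are sound but rest on this broken foundation. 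A secondary problem is that $\operatorname{Stab}$ of a complete type is in general only an intersection of relatively definable $\varphi$-stabilizers, i.e.\ type-definable, so the relative definability of your $N$ also needs justification. The cited proof instead first extracts from non-foreignness an element $g_0=f(g)\in\dcl(Bg)\setminus\acl(B)$ with $\tp(g_0/B)$ $\Sigma$-internal (Hrushovski's lemma on foreign types), and then takes $N$ to be the stabilizer of the \emph{germ} of $f$, namely the set of $n$ with $f(xn)=f(x)$ for $x$ generic over $Bn$: this is relatively definable by definability of types, has infinite index because $f(g)\notin\acl(B)$, and $G/N$ embeds into the $\Sigma$-internal family of germs $x\mapsto f(xg)$; one then normalises exactly as you do at the end.
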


Inspired by Fact \ref{F:CM} we show the following.

\begin{theorem}\label{T:Main} Let $T$ be stable and \cml\ with respect to $\Sigma$. 
\begin{enumerate}
\item An interpretable field $K$ is $\Sigma$-internal. 
\item An interpretable group $G$ of finite Lascar rank is 
nilpotent-by-(almost $\Sigma$-internal). In particular, an interpretable non-abelian simple group is $\Sigma$-internal.
\end{enumerate}
\end{theorem}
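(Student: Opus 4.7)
The overall strategy is to reduce each part to a non-foreignness statement for the relevant generic type, and then invoke Fact~\ref{F:a.intoint}. Where Pillay's arguments for CM-triviality (Fact~\ref{F:CM}) derive a contradiction from the demand that some $\cb(c/A)$ be algebraic over $\cb(c/B)$, the hypothesis \CML\ only forces $\tp(\cb(c/A)/\cb(c/B))$ to be almost $\Sigma$-internal. This weaker output is exactly what is needed to witness that a related generic type is not foreign to $\Sigma$.

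For part~(1), let $K$ be an interpretable infinite field. The plan is to import Pillay's configuration from his proof of Fact~\ref{F:CM}(1): generic independent $a,b,c \in K$, the affine line $\ell: y = ax+b$, the generic point $(c, ac+b)$ on it, and a second auxiliary line through the same point. With $A \subset B$ chosen exactly as in Pillay, one verifies the algebraic-intersection hypothesis $\aclq(Ac) \cap \aclq(B) = \aclq(A)$ of Definition~\ref{CM-linear}. The conclusion of \CML\ then places the canonical base of a generic point of $\ell$ over $A$ in an almost $\Sigma$-internal relation with its counterpart over $B$, and propagating this through the field operations shows that the generic type of $K$ is not foreign to $\Sigma$. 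Since $K$ is definably simple as a division ring, Fact~\ref{F:a.intoint}(2) upgrades this to full $\Sigma$-internality.

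For the main clause of part~(2), we induct on $\RM(G)$ (passing to the connected component). The abelian case is immediate with nilpotent subgroup $G$ itself. Otherwise we adapt Pillay's CM-trivial group argument---built around centralizers of generic elements and canonical bases of generic cosets modulo $Z(G)$---to \CML. This produces a proper connected definable subgroup $N \trianglelefteq G^0$ such that $G/N$ is almost $\Sigma$-internal. The inductive hypothesis applied to $N$ yields a nilpotent $N$-normal subgroup $M$ with $N/M$ almost $\Sigma$-internal; replacing $M$ by its normal closure in $G$ (which remains nilpotent, by standard arguments in stable groups of finite Lascar rank, using Fact~\ref{F:CBP}(1) to control the parameters of conjugates) gives the required $G$-normal nilpotent subgroup with almost $\Sigma$-internal quotient. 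The ``In particular'' clause follows: in a non-abelian simple $G$, the resulting nilpotent normal subgroup must be trivial (a nilpotent simple group is abelian), so $G$ itself is almost $\Sigma$-internal, and Fact~\ref{F:a.intoint}(2) yields $\Sigma$-internality.

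The principal obstacle is the verification of the hypothesis $\aclq(Ac) \cap \aclq(B) = \aclq(A)$ in the chosen configurations, and in part~(2) ensuring that normality and nilpotency of the intermediate subgroups survive the inductive step. Both hurdles parallel those in the CM-trivial case, but must be re-examined here because the conclusion ``almost $\Sigma$-internal'' is weaker than ``algebraic'' and so propagates less cleanly through the group-theoretic manipulations; Fact~\ref{F:CBP} and Fact~\ref{F:a.intoint}(1) are the expected tools for handling this propagation.
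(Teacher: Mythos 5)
There are genuine gaps, concentrated in part (2) but present in part (1) as well. For part (1), the configuration you describe (a line $y=ax+b$ in the affine plane, a generic point on it, and ``a second auxiliary line'') is not Pillay's CM-triviality configuration: since the hypothesis of \CML\ needs a proper chain $A\subset B$ satisfying $\aclq(Ac)\cap\aclq(B)=\aclq(A)$, one needs a three-level flag --- a generic plane $P:ax+by+c=z$ in $K^3$, a line $\ell\subset P$, and a point $p\in\ell$, with $A=\aclq(\Par{P})$ and $B=\aclq(\Par{P}\Par{\ell})$. Moreover, ``propagating this through the field operations'' is precisely where the content lies: the paper writes $\ell$ by the equations $y=\lambda x+\mu$, $z=(a+b\lambda)x+(c+b\mu)$, notes that $a,\lambda,\mu,a+b\lambda,c+b\mu$ are generic independent, and uses $a\in\dcl(\Par P)$, $\Par\ell\in\dcl(a+b\lambda,c+b\mu,\lambda,\mu)$ together with $a\ind a+b\lambda,c+b\mu,\lambda,\mu$ to conclude that $\tp(a)$ itself is almost $\Sigma$-internal; without this explicit computation you have not exhibited a generic of $K$ whose type is almost $\Sigma$-internal.

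For part (2) the top-down induction does not work as stated. First, your inductive step requires producing a \emph{proper} normal $N\trianglelefteq G^0$ with $G/N$ almost $\Sigma$-internal; for a simple group no such proper $N$ exists, so the simple case must be proved directly, and this is the heart of the matter. The paper first reduces to the case where all proper connected type-definable subgroups of $G$ are nilpotent (the bad-group configuration), shows a maximal nilpotent $S$ meets its distinct conjugates trivially and that its conjugates cover the generics, and only then applies \CML\ to the configuration of \cite[Lemma 2.5]{Wa98} in $G\times G$ --- cosets of $G_a=\{(g,g^a)\}$ versus cosets of $S_a=\{(s,s^a):s\in S_0\}$ --- to make the generic of $Z(S_0)$ almost $\Sigma$-internal. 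The ``centralizers of generic elements and cosets modulo $Z(G)$'' argument you invoke is the Kowalski--Pillay argument for \CBP, which is not available here. Indeed, your repeated appeal to Fact~\ref{F:CBP} is illegitimate: that fact assumes the theory is \cbp\ with respect to $\Sigma$, a strictly stronger hypothesis than the theorem's \CML. Second, the step ``replacing $M$ by its normal closure in $G$, which remains nilpotent'' is unsubstantiated: the normal closure of a nilpotent subgroup need not be nilpotent, and this is exactly why the paper instead works with the normal series of \cite{BP00} with finite, abelian or simple quotients, takes $C$ to be the centraliser of the $\Sigma$-internal quotients, and proves directly that $C$ is solvable, that $C^0$ is nilpotent (ruling out an interpreted field acting on a foreign abelian quotient), and that $G/C$ embeds into a product of $\Sigma$-internal groups $G/C_G(Q_i)$.
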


\begin{proof} $(1)$ We adapt the proof of \cite[Proposition 3.2]{Pi95}. Let $K$ be our interpretable field. Consider a generic plane $P$ given by $ax+by+c=z$, where $a,b,c$ are generic independent. Take a line $\ell$ contained in $P$ given by $y=\lambda x + \mu$ such that $\lambda,\mu$ are generic independent over $a,b,c$. Take a point $p=(x,y,z)$ in $\ell\subset P$ generic over $a,b,c,\lambda,\mu$. 

Pillay has shown that $\aclq(Ap) \cap B = A$, where $A=\aclq(\Par{P})$ is the imaginary algebraic closure of the canonical parameter of the plane and $B=\aclq(\Par{P}\Par{\ell})$ that of the parameters of the plane and the line. Furthermore, he has also shown that $\cb(p/A)= \Par{P}$ and $\cb(p/B)= \Par{\ell}$. By \CML, $\tp(\Par{P}/\Par{\ell})$ 
is almost $\Sigma$-internal. Note that the line $\ell$ is given by the equations 
$$\begin{aligned}
   y &=\lambda x + \mu \\
   z &= (a+b\lambda) x + (c+b\mu).
  \end{aligned}$$
Since $a,\lambda,\mu,b,c$ are generic independent, so are $a,\lambda,\mu,b\lambda+a,c+b\mu$.
Now 
$$a\in\dcl(\Par{P})\quad\text{and}\quad\Par{\ell}\in\dcl(a+b\lambda,c+b\mu,\lambda,\mu)\,;$$
as $\tp(\Par{P}/\Par{\ell})$ is almost $\Sigma$-internal, so is $\tp(a/a+b\lambda,c+b\mu,\lambda,\mu)$. Since 
$$a\ind a+b\lambda,c+b\mu,\lambda,\mu,$$
we obtain that $\tp(a)$ is almost $\Sigma$-internal. Thus $K$ must be $\Sigma$-internal by Fact \ref{F:a.intoint}
\vskip5mm
$(2)$ Let $G$ be our group, which we may assume infinite. We treat first the case where $G$ is simple non-abelian.
We may assume that every type-definable connected subgroup of $G$ is solvable: Otherwise we consider an infinite type-definable connected non-solvable subgroup $G_0$ of minimal Lascar rank, and a type-definable connected proper normal subgroup $H_0$ of $G_0$ of maximal rank. Then $H_0$ is solvable by minimality of rank, and contained in a relatively definable solvable proper normal subgroup $H_1$ of $G_0$ by stability. Moreover, every proper type-definable normal subgroup of $G_0/H_1$ is finite and hence central by connectedness, so $G_0/Z(G_0/H_1)$ is an infinite, type-definable, connected simple group of smaller Lascar rank than $G$. Inductively we may assume that this quotient is $\Sigma$-internal. But then the generic of $G$ is not foreign to $\Sigma$ by unidimensionality of simple groups of finite Lascar rank. Fact \ref{F:a.intoint} yields that $G$ is $\Sigma$-internal.
 
By superstability $G$ contains an infinite type-definable solvable connected subgroup. If there is one, $S$, which is not nilpotent, then there is a section of $G$ which interprets a field. By $(1)$, this field is $\Sigma$-internal, and so is $G$ by unidimensionality and Fact \ref{F:a.intoint}.

Otherwise, all proper connected type-definable subgroups of $G$ are nilpotent. Let $S$ be one of maximal Lascar rank and observe that $S$ intersects trivially any other maximal type-definable connected nilpotent subgroup $S_0$: Let $I=S\cap S_0$ be the intersection and assume $S_0$ is such that $I$ is non-trivial of maximal rank possible. Since both $S$ and $S_0$ are connected, the indices of $I$ in $S$ and in $S_0$ are both infinite. Hence so is the index of $I$ in $N_S(I)$ because $S$ is nilpotent \cite[Proposition 1.12]{Po85}. Simplicity of $G$ yields that $N_G(I)\not=G$. Therefore, $N_G(I)^0$ is nilpotent and contained in some maximal type-definable connected nilpotent subgroup $S_1$. But then $I< N_S(I)^0\le S_1\cap S$ yields an intersection of larger rank, whence $S_1=S$ by maximality. Now $I< N_{S_0}(I)^0\le S_1\cap S_0=S\cap S_0$ again provides an intersection of larger rank, 
a contradiction. In particular, $S$ intersects all its distinct conjugates trivially. Moreover, as $N_G(S)^0$ must also be 
nilpotent, maximality of $S$ implies that $S$ has bounded index in its normalizer. So
$$U(\bigcup_{g\in G}S^g)=U(G/N_G(S))+U(S)=U(G/S)+U(S)=U(G)$$
and hence every generic element of $G$ is contained in some conjugate of $S$.

We can now use the proof of \cite[Lemma 2.5]{Wa98}, which generalizes \cite[Lemma 3.4 and Proposition 3.5]{Pi95}, 
taking $\mathfrak B$ to be the family of $G$-conjugates of $S$, and $Q$ the family of all types of rank $1$ 
(so $Q$-closure $\cl_Q$ equals imaginary algebraic closure). It is shown in \cite{Wa98} that for $a,b,c$ in $G$ generic 
independent and a generic conjugate $S_0$ of $S$ over $a,b,c$, we have
$$\aclq(b,c,A)\cap B = A,$$ 
where $A=\aclq(\Par{(b,c)\cdot G_a})$ is the imaginary algebraic closure of the 
canonical parameter of the coset determined by $(b,c)$ of the subgroup 
$$G_a=\{(g,g^a)\mid g\in G\},$$
and $B=\aclq(\Par{P}\Par{l})$, where $l$ is the coset determined by $(b,c)$ of the subgroup 
$$S_a=\{(s,s^a)\mid s\in S_0\}.$$
Moreover, $\cb(b,c/A)= \Par{(b,c)\cdot G_a}$ and $\cb(b,c/B)= \Par{(b,c)\cdot S_a }$. By \CML, the type 
$\tp(\cb(b,c/A)/\cb(b,c/B))$ is almost $\Sigma$-internal.

Since $G$ is simple non-abelian, its center is trivial, so $a$ is definable over $\Par{G_a}$, which in turn is definable 
over $\Par{(b,c)\cdot G_a}=\cb(b,c/A)$. Furthermore, $\Par{(b,c)\cdot S_a}$ is definable over $\aclq(b,c,\Par{S_0}, 
\Par{a\cdot Z(S_0)})$. Hence $\tp(a/b,c,\Par{S_0},\Par{a\cdot Z(S_0)})$ is almost $\Sigma$-internal. As $b,c$ are generic 
independent from $a$ and the parameters of $S_0$, we have 
$$ b,c \ind_{\Par{S_0},\Par{a\cdot Z(S_0)}} a,$$
so the non-forking restriction $\tp(a/\Par{S_0},\Par{a\cdot Z(S_0)})$ is almost $\Sigma$-internal as well. 

Now, since $a$ is generic over $\Par{S_0}$, it is generic in $aZ(S_0)$ over 
$\Par{a\cdot Z(S_0)},\Par{S_0}$  by stability. So the generic type of $aZ(S_0)$ is almost $\Sigma$-internal, and so is the 
generic type of $Z(S_0)$. Since $S$ (and hence $S_0$) is nilpotent infinite, so is its center \cite[Proposition 1.10]{Po85}. 
Again unidimensionality of $G$ and Fact \ref{F:a.intoint} yield that $G$ is $\Sigma$-internal.

Let now $G$ be an arbitrary type-definable connected infinite group of finite Lascar rank in a \cml\ theory. By \cite{BP00}, there is a series of definable $G$-invariant subgroups
$$G=G_0\triangleright G_1\triangleright G_2\triangleright\cdots\triangleright G_n=\{1\}$$
such that all quotients $Q_i=G_i/G_{i+1}$ for $i<n$ are either finite, abelian, or simple. In fact, applying repeatedly 
Fact \ref{F:a.intoint} with respect to a single type of Lascar rank $1$ which is either a completion of a type in 
$\Sigma$ or foreign to $\Sigma$, we may refine the series and assume that every abelian quotient is unidimensional, and 
either $\Sigma$-internal or foreign to $\Sigma$. In the latter case, no definable section of the quotient is almost $\Sigma$-internal. Clearly all the finite quotients are $\Sigma$-internal; by the previous discussion, so are the simple quotients as well.

Let $C$ be the centraliser in $G$ of all 
$\Sigma$-internal quotients. If $Q_i$ is abelian, then trivially 
$(C\cap G_i)'\le C\cap G_{i+1}$; otherwise $C$ centralises $Q_i$ by definition, so again $(C\cap G_i)'\le C\cap G_{i+1}$. 
Thus $C$ is solvable. 

Suppose that $C^0$ is not nilpotent. Then there is a definable section $H$
of $C^0$ with a definable normal subgroup $K\triangleleft H$, such that $K$ is the additive group of an infinite 
interpretable field 
and and $H/K$ is isomorphic to an infinite multiplicative subgroup $T\le K^\times$, such that the multiplicative action of 
$T$ on $K^+$ comes from the action of $H/K$ on $K$ by conjugation. By \cite[Lemme 3.4]{Po85} 
(or \cite[Proposition 3.7.7]{Wa97} for the case of finite Lascar rank), the subgroup $T$ generates $K$ additively. 
It follows that $K$ has no non-trivial $H$-invariant subgroup: If $K_0\le K$ is $H$-invariant and $0\not=a\in K_0$, then 
$a^{-1}K_0$ is also $H$-invariant and contains $T$. As it is closed under addition, $a^{-1}K_0\ge\langle T\rangle^+=K$, 
whence $K_0=K$. Note that $K$ is $\Sigma$-internal by part (1).

Let $\tilde H$ and $\tilde K$ be the preimages of $H$ and $K$ in $C$, and let $i<n$ be minimal such that $\tilde K\le G_i$. Since 
$\tilde K\cap G_{i+1}$ is normal in $\tilde H$, it induces an $H$-invariant subgroup of $K$, which must be trivial. 
Therefore $K$ embeds definably into $Q_i$,  which must be almost $\Sigma$-internal as $K$ is. But then $Q_i$ is centralised by 
$C$, so $K$ is centralised by $H$. But this means that the multiplicative action of $T$ on $K^+$ is trivial, a contradiction.

Finally, stability yields that $C_G(Q_i)=C_G(\bar q_i)$ for some 
finite tuple $\bar q_i$ in $Q_i$. Hence any element $gC_G(Q_i)$ in $G/C_G(Q_i)$ is definable over the tuple $\bar q_i\bar q_i^g\in Q_i$. Therefore $G/C_G(Q_i)$ is $Q_i$-internal, and thus $\Sigma$-internal. Now $G/C$ embeds definably into the product 
$$\prod\{G/C_G(Q_i):Q_i\text{ $\Sigma$-internal}\}$$
via the map 
$$gC\mapsto (gC_G(Q_i):Q_i\text{ $\Sigma$-internal}).$$
Therefore $G/C$ is $\Sigma$-internal, and $G/C^0$ is almost $\Sigma$-internal.
\end{proof}

\begin{remark} If $\Sigma$ contains all non one-based types of rank $1$, 
\cite[Theorem 6.6]{PalWa11} implies in particular that  a group $G$ of finite Morley rank has a nilpotent subgroup $N$ such that $G/N$ is almost $\Sigma$-internal, without any hypothesis of \CML. Hence part $(2)$ above only provides additional information if $\Sigma$ does not contain all non one-based types of rank $1$. Note that if $\Sigma$ contains
 all $2$-ample types, then any simple theory is {\em weakly} \cml\ with respect to $\Sigma$ by \cite[Theorem 5.1]{PalWa11},
 where we replace almost internality by analysability in the definition.
\end{remark}

\begin{question}
What can one say about groups of infinite rank, or even merely stable ones? For instance, in a \cml\ stable group, can one find a nilpotent normal subgroup such that the quotient is almost $\Sigma$-internal?
\end{question}

\begin{remark}
The theory $T_{FPS}$ of the free pseudospace \cite{BP00} is not CM-trivial, and its collection of finite rank types is not \cbp\ with respect to the family of rank $1$ types (i.e.\ $T_{FPS}$ does not have the CBP). However $T_{FPS}$ is \cml\ with respect to the family of rank $1$ types \cite{BMPZ12}. Unfortunately, $T_{FPS}$ is trivial and therefore no infinite group is interpretable. Thus this example provides little insight into the significance of the above theorem. 

Any almost strongly minimal theory is \cbp\ and thus \cml\ with respect to the family of rank $1$ types; an example of a \cml\ non \cbp\ structure interpreting infinite groups is thus given by the disjoint union of the free pseudospace with an almost strongly minimal group. However, we don't know any example of a non-trivial \cml\ non \cbp\ theory which does not decompose into a trivial and a \cbp\ part.
\end{remark}

\subsection*{Acknowledgements} Research partially supported by the Agence Na\-tio\-nale pour la Recherche under contract ANR-09-BLAN-0047 Modig. The second author was supported by an Alexander von Humboldt-Stiftung For\-schungsstipendium f\"ur erfahrene Wissenschaftler 3.3-FRA/1137360 STP.

\end{document}